\numberwithin{equation}{section}
\newtheorem{theorem}{Theorem}[section]
\newtheorem{proposition}[theorem]{Proposition}
\newtheorem{lemma}[theorem]{Lemma}
\newtheorem{corollary}[theorem]{Corollary}
\newtheorem{problem}[theorem]{Problem}
\theoremstyle{definition}
\theoremstyle{remark}
\newcommand{\Z}{\mathbb{Z}}
\newcommand{\Q}{\mathbb{Q}}
\newcommand{\C}{\mathbb{C}}
\newcommand{\ch}{\mathrm{ch}}
\title{$p$-local stable cohomological rigidity of quasitoric manifolds}
\author{Sho Hasui}
\address{Department of Mathematics, Kyoto University, Kyoto, 606-8502, Japan}
\email{s.hasui@math.kyoto-u.ac.jp}
\author{Daisuke Kishimoto}
\address{Department of Mathematics, Kyoto University, Kyoto, 606-8502, Japan}
\email{kishi@math.kyoto-u.ac.jp}
\date{\today}
\subjclass[2010]{Primary 55P15; Secondary 57S25}
\keywords{quasitoric manifold, cohomological rigidity, stable homotopy type, localization, Adams $e$-invariant}
\begin{document}

\maketitle

% baseline
\baselineskip 18pt

\begin{abstract}
It is proved that if two quasitoric manifolds of dimension $\le 2p^2-4$ for a prime $p$ have isomorphic cohomology rings, then they have the same $p$-local stable homotopy type.
\end{abstract}

\section{Introduction}

A class $\mathcal{C}$ of spaces is called cohomologically rigid if any spaces in $\mathcal{C}$ having isomorphic cohomology rings are homeomorphic with each other. It is well known that cohomology rings do not distinguish closed manifolds up to homeomorphism (or even homotopy equivalence), so the class of all closed manifolds is not cohomologically rigid. But what can we say about the cohomological rigidity if we restrict to a class of manifolds with good symmetries? The manifolds that we consider in this paper are quasitoric manifolds which were introduced by Davis and Januszkiewicz \cite{DJ} as a topological counterpart of smooth projective toric varieties. Since their introduction, quasitoric manifolds have been prominent objects which produce fruitful interactions of algebra, combinatorics, geometry, and topology. Formally, a quasitoric manifold is defined by a $2n$-dimensional manifold $M$ with a locally standard $n$-dimensional torus, say $T^n$, action such that the orbit space $M/T^n$ is identified with a simple polytope as manifolds with corners, where a locally standard $T^n$-action means that it is locally a coordinatewise $T^n$-action on $\C^n$. We refer to \cite{BP} for details. 

The cohomological rigidity problem for quasitoric manifolds was originally posed by Masuda, where there is a good survey \cite{CMS2}. For several simple quasitoric manifolds, the cohomological rigidity problem was affirmatively solved as in  \cite{DJ,CMS1,CPS,H1,H2}, but their approaches are quite ad-hoc. So we would like to consider the cohomological rigidity probem for general quasitoric manifolds. In general, we can approach to the cohomological rigidity in two steps which are quite different in nature: the first step is to show that spaces in question with isomorphic cohomology rings are homotopy equivalent, and the second step is to convert the homotopy equivalences obtained in the first step into homeomorphisms. In this paper, we study the first step for quasitoric manifolds from homotopy theoretical point of view. We will actually consider the following problem.

\begin{problem}
Do quasitoric manifolds with isomorphic cohomology rings have the same $p$-local stable homotopy type?
\end{problem}

As a first step to attack this problem, the authors and Sato \cite{HKS} obtained the following result which is a consequence of the $p$-local splitting of $\Sigma M$ and $\Sigma N$ in \cite{HKS}; under the assumption of the theorem, the splitting shows that $\Sigma M_{(p)}$ and $\Sigma N_{(p)}$ are wedges of $p$-local spheres.

\begin{theorem}
[Hsaui, Kishimoto, and Sato \cite{HKS}]
\label{HKS}
If $M,N$ are quasitoric manifolds with the same betti numbers and $\dim M=\dim N<2p$, then $\Sigma M_{(p)}\simeq\Sigma N_{(p)}$.
\end{theorem}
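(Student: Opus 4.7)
The plan is to unwind the cited wedge decomposition from \cite{HKS} and match up the pieces using the Betti number hypothesis; the argument is short because the real content has been isolated in \cite{HKS}.

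First, I would invoke the $p$-local splitting theorem of \cite{HKS}, which is the result the authors flag in the paragraph just before the statement: for any quasitoric manifold $M$ with $\dim M < 2p$, the suspension splits $p$-locally as a wedge of $p$-local spheres,
$$\Sigma M_{(p)} \simeq \bigvee_{k \ge 1} \bigvee^{r_k(M)} S^k_{(p)},$$
and similarly for $\Sigma N$. Thus each of $\Sigma M_{(p)}$ and $\Sigma N_{(p)}$ is determined up to homotopy equivalence by the sequence of nonnegative integers $(r_k)_{k\ge 1}$.

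Second, I would identify these integers. Since the $p$-local homology of a wedge of $p$-local spheres is free, $r_k(M) = \mathrm{rank}\, H_k(\Sigma M;\Z_{(p)}) = \mathrm{rank}\, H_{k-1}(M;\Z_{(p)})$. Because the integral cohomology of a quasitoric manifold is concentrated in even degrees and torsion-free (one of the basic outputs of the Davis--Januszkiewicz theory), these $p$-local ranks coincide with the ordinary Betti numbers $b_{k-1}(M)$. Combined with the hypothesis $b_i(M)=b_i(N)$ for every $i$, this gives $r_k(M)=r_k(N)$ for all $k$, so the two wedges are term-by-term equivalent and $\Sigma M_{(p)}\simeq\Sigma N_{(p)}$.

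The main obstacle is really the splitting theorem itself; once that is granted, reading off the wedge summands from the Betti data is purely formal. In the cell range at issue the dimensional gaps in $\Sigma M$ are not automatically wide enough to force a splitting from the vanishing of $\pi^S_\ast$ in the Hopf invariant range alone, so the work of \cite{HKS} genuinely is doing the heavy lifting. But since that splitting has already been established under the stated hypothesis $\dim M<2p$, the theorem follows with nothing more than the homological bookkeeping above.
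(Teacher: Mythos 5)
Your proposal matches the paper's own reasoning: the paper explicitly notes, in the sentence preceding the theorem statement, that the result follows because the $p$-local splitting of \cite{HKS} exhibits $\Sigma M_{(p)}$ and $\Sigma N_{(p)}$ as wedges of $p$-local spheres when $\dim M < 2p$, and the Betti number hypothesis then matches the wedge summands. Your homological bookkeeping and your assessment that the splitting theorem of \cite{HKS} carries the real weight are both exactly right.
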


%\begin{remark}
%We can refine Theorem \ref{HKS} by replacing the assumption with $H^*(M)\cong H^*(N)$ and $\dim M=\dim N<p$. Indeed, the splitting of \cite{HKS} shows that $\Sigma M_{(p)}$ and $\Sigma N_{(p)}$ are the $p$-localization of wedges of spheres and $S^k\cup_{\alpha_1}e^{k+2p-2}$ which are detected by the Steenrod operation $\mathcal{P}^1$, and the isomorphism $H^*(M)\cong H^*(N)$  respects $\mathcal{P}^1$ since they are generated by 2-dimensional elements.
%\end{remark}

This paper shows a much more general $p$-local stable cohomological rigidity of quasitoric manifolds by considering $K$-theory, where we do not employ the $p$-local stable splitting of quasitoric manifolds. We say that an isomorphism $\theta\colon H^*(X)\xrightarrow{\cong}H^*(Y)$ is $p$-locally realized by a stable map if there is a stable map $h\colon\Sigma^\infty Y_{(p)}\to\Sigma^\infty X_{(p)}$ which is $\theta\otimes\Z_{(p)}$ in cohomology with $\Z_{(p)}$-coefficient. Note in particular that $h$ is a $p$-local stable homotopy equivalence by the J.H.C. Whitehead theorem whenever $X,Y$ are CW-complexes. We now state our main result.

\begin{theorem}
\label{main}
Any cohomology isomorphism between quasitoric manifolds of dimension $\le 2p^2-4$ is $p$-locally realized by a stable map.
\end{theorem}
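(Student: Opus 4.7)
\emph{Proof plan.} My approach is to route the realization problem through complex $K$-theory and the Adams $e$-invariant. Every quasitoric manifold admits a CW decomposition with cells only in even dimensions (Davis--Januszkiewicz), so $H^*(M;\Z)$ and $H^*(N;\Z)$ are torsion-free and concentrated in even degrees. The Atiyah--Hirzebruch spectral sequence for complex $K$-theory therefore collapses at $E_2$, making $K^0(M)_{(p)}$ and $K^0(N)_{(p)}$ free $\Z_{(p)}$-modules of the same rank as $H^{\mathrm{even}}(-;\Z)$, and the Chern character gives a ring isomorphism $\ch\colon K^0(-)\otimes\Q\xrightarrow{\cong}H^{\mathrm{even}}(-;\Q)$.

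The first step is to lift $\theta$ to a $p$-local $K$-theory isomorphism $\Theta\colon K^0(M)_{(p)}\xrightarrow{\cong}K^0(N)_{(p)}$ that is compatible with $\theta$ via the Chern character and commutes with the Adams operations $\psi^k$. Rationally, $\Theta$ is forced by $\theta\otimes\Q$ together with $\ch$; making it $p$-locally integral uses the dimension hypothesis to bound the denominators that appear in the Chern character on classes supported in degree at most $2p^2-4$, and those denominators become units in $\Z_{(p)}$. Compatibility of $\Theta$ with $\psi^k$ follows by naturality from the fact that $\theta$ respects the ring structure.

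The second step is to realize $\Theta$ by a stable map $h\colon\Sigma^\infty N_{(p)}\to\Sigma^\infty M_{(p)}$, built inductively over the skeleta of $N$. The obstruction to extending $h$ over a $(2k)$-cell lies in $\pi_{2k-1}^S(M)_{(p)}$ with $2k\le 2p^2-4$, and since $M$ also has only even cells the Atiyah--Hirzebruch filtration of this group involves only odd-degree stable stems $\pi_{\mathrm{odd}}^S(S^0)_{(p)}$. In the range $\le 2p^2-5$ these odd-degree $p$-local stems are generated by the $\alpha$-family $\alpha_i\in\pi_{2i(p-1)-1}^S(S^0)_{(p)}$, each faithfully detected by the Adams $e$-invariant and hence by $\psi^p$ on $K$-theory. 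A parity count is crucial here: the first $\beta$-class $\beta_1\in\pi_{2p^2-2p-2}^S(S^0)_{(p)}$, which would be invisible to $\psi^p$, lies in an even degree and so does not contribute. The $\psi^p$-equivariance of $\Theta$ therefore forces every obstruction to vanish inductively.

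The main obstacle is carrying out this matching of cellular obstructions with $\psi^p$-data precisely: one must traverse the Atiyah--Hirzebruch differentials and extensions in $\{N,M\}_{(p)}$ degree by degree, identify each relevant $\alpha$-family contribution, and verify that $\Theta$-compatibility with $\psi^p$ kills it via the $K$-theoretic interpretation of the $e$-invariant. A secondary difficulty is assembling the cellular extensions coherently into a single stable map, which depends on absorbing the obstruction-theoretic indeterminacy into the remaining freedom in $\Theta$.
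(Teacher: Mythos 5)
Your overall architecture matches the paper's: lift the cohomology isomorphism to $K$-theory, then realize the $K$-theoretic isomorphism by a stable map built inductively over skeleta, killing obstructions in odd stable stems by means of the Adams $e$-invariant (which you phrase equivalently via $\psi^p$). The paper's Theorem~\ref{realizability} plus Proposition~\ref{e-invariant-trivial} carry out exactly the inductive-over-skeleta step you describe, using a ``generalized $e$-invariant'' bookkeeping device rather than an explicit traversal of the Atiyah--Hirzebruch spectral sequence for $\{N,M\}_{(p)}$, but this is a presentational difference, not a mathematical one.

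However, there is a genuine gap in your first step, the $K$-theoretic lift. You claim that the denominators appearing in the Chern character on classes of degree at most $2p^2-4$ become units in $\Z_{(p)}$. This is false: the degree-$2k$ component of $\ch$ carries a factor of $1/k!$, and for $k$ in the range $p\le k\le p^2-2$ (which is nonempty for every prime) $k!$ is divisible by $p$, so $1/k!\notin\Z_{(p)}$. Hence you cannot define $\Theta$ on $K^0_{(p)}$ merely by ``forcing it rationally through $\ch$ and observing integrality.'' The paper avoids this by invoking a structural property of quasitoric manifolds that you have not used: $H^*(M)$ is generated by degree-$2$ classes (Proposition~\ref{qt}(2)). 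One then maps $M_1,M_2$ to $(\C P^\infty)^\ell$ using a basis of $H^2$, observes via the Atiyah--Hirzebruch spectral sequence that the induced maps $K((\C P^\infty)^\ell)\to K(M_i)$ are surjective, and pushes the tautological identity on $K((\C P^\infty)^\ell)$ forward to define $\theta\colon K(M_1)\to K(M_2)$ integrally; injectivity of the Chern character on torsion-free $K$-theory then shows $\theta$ is a well-defined ring isomorphism compatible with $\bar\theta$. This is Theorem~\ref{K-lift}, and it is the step you would need to supply. Your observation that compatibility with $\ch$ yields compatibility with $\psi^k$ on torsion-free $K$-theory is correct, but it cannot substitute for an actual construction of the integral (or $p$-integral) lift.
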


\begin{corollary}
If two quasitoric manifolds of dimension $\le 2p^2-4$ have isomorphic cohomology rings, then they have the same $p$-local stable homotopy type.
\end{corollary}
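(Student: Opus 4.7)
My plan is to use complex $K$-theory as an intermediary: first upgrade the given ring isomorphism $\theta\colon H^{*}(M)\to H^{*}(N)$ to a $K$-theoretic isomorphism compatible with $\theta$ via the Chern character, and then realise that $K$-theoretic isomorphism by a $p$-local stable map through a cell-by-cell obstruction argument controlled by the Adams $e$-invariant. Two structural features of quasitoric manifolds drive the whole approach: they admit CW-decompositions with cells only in even dimensions (from a generic linear functional on the orbit polytope), and consequently both $H^{*}$ and $K^{*}$ are torsion-free and concentrated in even degrees.

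\textbf{Step 1: cohomology to $K$-theory.} The Atiyah--Hirzebruch spectral sequence $H^{*}(M;\pi_{*}\mathrm{KU})_{(p)}\Rightarrow K^{*}(M)_{(p)}$ has its first potentially nonzero differential at $d_{2p-1}$ (essentially the Milnor primitive $Q_{1}$), and the next at $d_{2p^{2}-1}$. Since $\dim M,\dim N\le 2p^{2}-4$, only $d_{2p-1}$ can contribute; this differential is determined by the mod-$p$ Steenrod algebra action on cohomology and is therefore preserved by the ring isomorphism $\theta$. Combined with the torsion-freeness of $K^{0}$, a filtered extension argument then produces an isomorphism $\Theta\colon K^{0}(N)_{(p)}\xrightarrow{\cong}K^{0}(M)_{(p)}$ with $\mathrm{ch}\circ\Theta=(\theta\otimes\Q)\circ\mathrm{ch}$.

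\textbf{Step 2: realisation by a stable map.} Fix an even-skeletal filtration $N^{(0)}\subset N^{(2)}\subset\cdots\subset N$, and construct stable maps $h^{(2k)}\colon\Sigma^{\infty}N^{(2k)}_{(p)}\to\Sigma^{\infty}M_{(p)}$ realising the restriction of $\Theta$, inductively on $k$. From the cofiber sequence
\[
\Sigma^{\infty}N^{(2k)}_{(p)}\to\Sigma^{\infty}N^{(2k+2)}_{(p)}\to\bigvee S^{2k+2}_{(p)},
\]
the obstruction to extending $h^{(2k)}$ across the $(2k+2)$-cells lies in $\bigoplus\pi_{2k+1}^{\mathrm{st}}(M)_{(p)}$. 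The even-cell structure of $M$ means this group receives AHSS contributions only from \emph{odd} $p$-local stable stems of spheres, and since $h^{(2k)}$ is already compatible with $\Theta$ on $K$-theory the obstruction lies in the kernel of the natural comparison into $\mathrm{Hom}(K^{0}(M),K^{0}(\bigvee S^{2k+2}))_{(p)}$. The Adams $e$-invariant detects exactly this kernel in the relevant range, and the bound $2p^{2}-4$ is what guarantees that every obstruction actually falls in the subgroup the $e$-invariant controls and can be killed after a suitable adjustment of $h^{(2k)}$ by an element of the indeterminacy.

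\textbf{Main obstacle.} The delicate point is entirely in Step 2: establishing that the indeterminacy available at stage $2k$ is rich enough to annihilate the next obstruction without destroying the $K$-theoretic data already arranged, and making precise why the quantitative bound $2p^{2}-4$ — rather than the naive bound $2p-2$ coming from the first nonzero $p$-local stable stem — is the correct one. This comes down to a careful range analysis of the Adams $e$-invariant on odd stems, together with the compatibility of the inductive data with the filtered structure produced in Step 1.
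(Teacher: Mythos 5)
Your broad plan — pass through complex $K$-theory and then realise the $K$-theoretic isomorphism stably, cell by cell, with the Adams $e$-invariant killing obstructions — is the same plan the paper executes. But Step~1 as you describe it has a genuine gap, and the reasoning you give for it is off in a way worth flagging.

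For an evenly generated complex such as a quasitoric manifold, \emph{every} odd AHSS differential (including $d_{2p-1}$ and $d_{2p^2-1}$) vanishes automatically for parity reasons, so the dimension bound $2p^2-4$ plays no role at all in the collapse of the spectral sequence; that bound enters only through the injectivity range of the Adams $e$-invariant (Adams/Toda: $e\colon\pi_{2k-1}^S\to\Q/\Z$ is $p$-locally injective for $k\le p^2-3$). More seriously, AHSS collapse plus a cohomology ring isomorphism does not by itself give a $K$-theory isomorphism compatible with the Chern character. Compatibility means that $\bar\theta\otimes\Q$ carries the lattice $\ch(K(M_1))\subset H^*(M_1;\Q)$ isomorphically onto $\ch(K(M_2))$, and this is exactly the content one must prove; a "filtered extension argument" does not deliver it. The paper closes this gap by using the specific structure of quasitoric manifolds: $H^*(M)$ is generated in degree~$2$, so one gets maps $\rho_i\colon M_i\to(\C P^\infty)^\ell$ inducing surjections on $K$-theory, and the Chern-character-compatible lift $\theta$ is produced by the identity $\theta(\rho_1^*(y))=\rho_2^*(y)$ (Theorem~4.2). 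That degree-$2$-generated input is essential and is absent from your Step~1.

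Step~2 is essentially the paper's argument in outline, though the paper's version is organised around constructing maps between skeleta $X_2^{(2k)}\to X_1^{(2k)}$ rather than into the whole target, and it makes the obstruction calculus precise via an "admissible basis" of $\widetilde K$ of each skeleton and a generalised $e$-invariant $e(\mathcal B^d(f))^i_j$ attached to attaching maps; the vanishing of these quantities (Proposition~2.6) is what lets the attaching maps be compared stably. Your sketch correctly identifies the $e$-invariant as the detecting device and the dimension bound as the place it is used, but you would still need the concrete bookkeeping device to turn "lies in the kernel of the comparison to $K$-theory" into a statement the Adams--Toda theorem actually applies to. Finally, note the corollary needs the observation that a stable map inducing a $\Z_{(p)}$-cohomology isomorphism is a $p$-local stable equivalence (Whitehead); you leave this implicit.
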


Hereafter, let $p$ denote an odd prime unless otherwise specified. The 2-primary case will be dealt with only at the end of this paper.

%By a slight modification of the method to prove Theorem \ref{main}, we obtain a complete answer to our problem in 6-dimension.

%\begin{theorem}
%\label{6-dim}
%Any cohomology isomorphism between 6-dimensional quasitoric manifolds is realized by a stable map.
%\end{theorem}

%\begin{corollary}
%If two 6-dimensional quasitoric manifolds have isomorphic cohomology, then they have the same stable homotopy type.
%\end{corollary}

\section{Adams $e$-invariant}

In this section, we recall the definition of the (complex) Adams $e$-invariant and its properties, and generalize it to maps from an odd sphere into a CW-complex without odd dimensional cells, where we refer to \cite{A} for details. Let $\pi^S_*$ denote the stable homotopy groups of spheres. Take $f\in\pi_{2k-1}^S$. Then it is a map $f\colon S^{2n+2k-1}\to S^{2n}$ for $n$ large. We now consider the $K$-theory of the mapping cone of $f$. Since there is a homotopy cofibration $S^{2n}\to C_f\to S^{2n+2k}$, $K(C_f)$ is a free abelian group of rank 2, and we can choose generators $\xi,\eta$ of $K(C_f)$ such that $\ch(\xi)=u_{2n}+au_{2n+2k}$ and $\ch(\eta)=u_{2n+2k}$ for $a\in\Q$, where $\ch\colon K(X)\to H^*(X)\otimes\Q$ and $u_i$ denote the Chern character and a generator of $H^i(C_f)\cong\Z$ respectively. Then the assignment 
$$e\colon\pi_{2k-1}^S\to\Q/\Z,\quad f\mapsto[a]$$
turns out to be a well-defined homomorphism, which is the Adams $e$-invariant. The property of the complex Adams $e$-invariant that we are going to use is the following.

\begin{theorem}
[Adams {\cite[Example 12.8]{A}} and Toda {\cite[Theorem 4.15]{T}}]
\label{e-invariant}
The Adams $e$-invariant $e\colon\pi_{2k-1}^S\to\Q/\Z$ is injective for $k\le p^2-3$ when localized at the prime $p$.
\end{theorem}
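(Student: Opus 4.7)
The plan is to combine two classical ingredients: Adams' detection of the image of the complex $J$-homomorphism by $e$, and Toda's calculation of the $p$-component of $\pi_*^S$ in the metastable range. Adams' theorem asserts that the restriction of $e$ to $\mathrm{Im}\bigl(J\colon\pi_{2k-1}(U)\to\pi_{2k-1}^S\bigr)$ is injective and realizes this image faithfully as a cyclic subgroup of $\Q/\Z$ whose order is controlled by the denominator of $B_k/2k$. Consequently, the $p$-local injectivity of $e$ in stem $2k-1$ is equivalent to the vanishing of $(\mathrm{coker}\,J)_{(p)}$ in that stem.

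Thus I would reduce the theorem to the statement
\[
(\mathrm{coker}\,J)_{(p)} = 0 \quad \text{in } \pi_{2k-1}^S \text{ for every } k\le p^2-3,
\]
and then quote this directly from \cite[Theorem 4.15]{T}. The bound $p^2-3$ is exactly the threshold before which no $p$-primary class outside $\mathrm{Im}\,J$ can appear in an odd stem: below that range Toda's Ext-group computations (equivalently, his analysis of the Adams spectral sequence for $\pi_*^S$ in the metastable range) show that the $p$-primary stable stems are exhausted by the image of $J$. One can cross-check the bound against the known first $p$-primary Greek-letter elements: the class $\beta_1$ lives in an even stem, so odd-stem obstructions come from products involving $\alpha_1$, and a direct check confirms the first such product lies in a stem above $2p^2-7$.

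Combining the two inputs gives the theorem. The substantive content is entirely in the second step --- Toda's explicit low-dimensional computation of $(\mathrm{coker}\,J)_{(p)}$. Once that vanishing in the stated range is accepted, the deduction from Adams' detection theorem is formal. The hard part, were one to redo the argument from scratch, is precisely this chromatic/Ext bookkeeping that identifies the first odd-stem $p$-primary element outside $\mathrm{Im}\,J$ and pins down the sharp bound $k\le p^2-3$.
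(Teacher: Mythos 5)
The paper does not give a proof of this statement at all: it is quoted verbatim as a known result, attributed to Adams \cite[Example 12.8]{A} and Toda \cite[Theorem 4.15]{T}. Your proposal correctly unpacks the content behind that citation: Adams' result that $e$ is a monomorphism on $\mathrm{Im}\,J$, together with the vanishing of $(\mathrm{coker}\,J)_{(p)}$ in the range of odd stems up to $2p^2-7$, which is what Toda's computations provide. Your sanity check on the bound is also right --- the first odd-stem $p$-primary class outside $\mathrm{Im}\,J$ is $\alpha_1\beta_1$ in stem $2p^2-5$, one step past $2k-1 = 2p^2-7$, so $k\le p^2-3$ is exactly sharp. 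Two minor caveats worth noting: the deduction ``$e$ injective on the whole stem $\Longleftrightarrow$ $(\mathrm{coker}\,J)_{(p)}=0$'' uses not only that $e$ is injective on $\mathrm{Im}\,J$ but that the image of $e$ is already achieved on $\mathrm{Im}\,J$ (so an extra class cannot accidentally hit a fresh value of $e$); and the normalization ``denominator of $B_k/2k$'' should really be stated in whichever Bernoulli convention Adams uses, but at an odd prime this makes no difference. In short, the proposal is a faithful reconstruction of the cited theorem; there is nothing in the paper itself to compare against.
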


%We next recall the real Adams $e$-invariant. Take $\alpha\in\pi_{4k-1}^S$. Then $\alpha$ is regarded as a map $\alpha\colon S^{8n+4k-1}\to S^{8n}$ for $n$ large. We now consider the submodule $\mathbf{c}(KO(C_\alpha))\subset K(C_\alpha)$, where $\mathbf{c}$ means the complexification. Quite similarly to the above complex case, we see that $\mathbf{c}(KO(C_\alpha))$ is a free abelian group of rank 2, and is generated by $\xi,\eta$ satisfying $\ch(\xi)=u_{8n}+au_{8n+4k}$ and $\ch(\eta)=bu_{8n+4k}$, where $b=1,2$ according as $k$ is even and odd. Then the real Adams $e$-invariant of $\alpha$ is defined by 
%$$e_\R(\alpha):=[\tfrac{a}{b}]\in\Q/\Z$$
%which yields a homomorphism $e_\R\colon\pi_{4k-1}^S\to\Q/\Z$ as well. We will use the following property of the real Adams $e$-invariant.

%\begin{proposition}
%The real Adams $e$-invariant yields an isomorphism $e_\R\colon\pi_3^S\xrightarrow{\cong}\Z/24$.
%\end{proposition}

We call a CW-complex consisting only of even dimensional cells evenly generated. We generalize the Adams $e$-invariant for maps from odd dimensional spheres into evenly generated CW-complexes. Let $X$ be a connected evenly generated finite CW-complex of dimension $2d$, and let $X^{(r)}$ denote its $r$-skeleton. We choose a basis of $K(X^{(2k)})$ called an admissible basis by induction on $k$: 
\begin{itemize}
\item Fix a basis $x_1^i,\ldots,x_{n_i}^i$ of $H^{2i}(X)$ for $i>0$. 
\item Choose a basis $\mathcal{B}^1:=\{\xi_1^1,\ldots,\xi_{n_1}^1\}$ of $\widetilde{K}(X^{(2)})$ satisfying $\ch(\xi_i^1)=x_i^1$. 
\item Choose a basis $\mathcal{B}^k:=\widehat{\mathcal{B}}^{k-1}\cup\{\xi_1^k,\ldots,\xi_{n_k}^k\}$ of $\widetilde{K}(X^{(2k)})$ such that $\widehat{\mathcal{B}}^{k-1}$ restricts to $\mathcal{B}^{k-1}$ and $\ch(\xi_i^k)=x_i^k$, where the element of $\widehat{\mathcal{B}}^{k-1}$ restricting to $\xi_j^i\in\mathcal{B}^{k-1}$ is denoted by $\xi_j^i$.
\end{itemize}
The following property of admissible bases is clear from the definition.

\begin{proposition}
\label{equiv-admissible}
Let $X,Y$ be connected evenly generated finite CW-complexes. For a homotopy equivalence $h\colon X\xrightarrow{\simeq}Y$ and an admissible basis $\mathcal{B}$ of $\widetilde{K}(Y)$, $h^*(\mathcal{B})$ is an admissible basis of $\widetilde{K}(X)$, where $h^*(\mathcal{B}):=\{h^*(\xi)\,\vert\,\xi\in\mathcal{B}\}$.
\end{proposition}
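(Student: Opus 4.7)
The plan is to reduce the statement to naturality of the Chern character and of the skeletal restriction, once $h$ has been made cellular. By the cellular approximation theorem I may assume $h$ is cellular, so it restricts to $h^{(2k)}\colon X^{(2k)}\to Y^{(2k)}$ on every even skeleton. Since $h$ is a homotopy equivalence, $h^*$ is an isomorphism on integral cohomology; I therefore declare $\{h^*(x_1^i),\dots,h^*(x_{n_i}^i)\}$ to be the chosen basis of $H^{2i}(X)$.

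The key intermediate fact I would prove is that each restricted map $(h^{(2k)})^*\colon\widetilde K(Y^{(2k)})\to\widetilde K(X^{(2k)})$ is an isomorphism. Because $Y$ has only even-dimensional cells, the quotient $Y/Y^{(2k)}$ is $(2k+1)$-connected with cells in dimensions $\ge 2k+2$, so the cofiber sequence $Y^{(2k)}\hookrightarrow Y\to Y/Y^{(2k)}$ forces $H^i(Y^{(2k)})\cong H^i(Y)$ for $i\le 2k$ (and both vanish for $i>2k$), and likewise for $X$. Thus $(h^{(2k)})^*$ is a cohomology isomorphism in every degree. Since on an evenly generated finite CW-complex the Atiyah--Hirzebruch spectral sequence for complex $K$-theory collapses at $E_2$ (only even rows survive as $H^{\mathrm{odd}}=0$), a comparison of spectral sequences upgrades the cohomology isomorphism to a $\widetilde K$-isomorphism.

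Given this, I would verify admissibility of $h^*(\mathcal{B})$ by induction on $k$. The set $h^*(\mathcal{B}^k)$ is a $\Z$-basis of $\widetilde K(X^{(2k)})$ by the preceding paragraph. The Chern character condition $\ch(h^*(\xi_i^k))=h^*(x_i^k)$ is immediate from naturality of $\ch$. Cellularity of $h$ makes the square
\[
\xymatrix{
\widetilde K(Y^{(2k)}) \ar[r] \ar[d]_{(h^{(2k)})^*} & \widetilde K(Y^{(2(k-1))}) \ar[d]^{(h^{(2(k-1))})^*} \\
\widetilde K(X^{(2k)}) \ar[r] & \widetilde K(X^{(2(k-1))})
}
\]
commute, so the subset $h^*(\widehat{\mathcal{B}}^{k-1})\subset h^*(\mathcal{B}^k)$ restricts to $h^*(\mathcal{B}^{k-1})$, which is admissible by the inductive hypothesis. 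Taken together, these three checks exhaust the defining properties.

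The only step with any genuine content is the $\widetilde K$-isomorphism at the skeleton level; the rest is naturality. Given the evenly generated hypothesis this follows routinely from the collapse of the Atiyah--Hirzebruch spectral sequence, so I do not expect any substantive obstacle.
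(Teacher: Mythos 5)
The paper offers no proof here, stating only that the claim is ``clear from the definition''; your argument supplies the natural justification and is correct. As a small simplification, note that every even skeleton of a connected evenly generated CW-complex is simply connected (having no $1$-cells forces a single $0$-cell), so once $(h^{(2k)})^*$ is shown to be an integral cohomology isomorphism, $h^{(2k)}$ is already a homotopy equivalence by Whitehead's theorem, and the Atiyah--Hirzebruch spectral sequence comparison can be bypassed.
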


For a map $f\colon S^{2r-1}\to X$, we define a basis $\mathcal{B}^d(f)$ of $\widetilde{K}(C_f)$ from an admissible basis $\mathcal{B}^d$ of $\widetilde{K}(X)$ by $\mathcal{B}^d(f):=\widehat{\mathcal{B}}^d\cup\{\eta\}$ such that $\widehat{\mathcal{B}}^d$ restricts to $\mathcal{B}^d$ and $\ch(\eta)=u_{2r}$, where $u_{2r}$ represents the cell attached by $f$ and $\xi_j^i\in\widehat{\mathcal{B}}^d$ denotes the element restricting to $\xi_j^i\in\mathcal{B}^d$. We now define  $e(\mathcal{B}^d(f))^i_j\in\Q$ by
$$\ch(\xi_j^i)=e(\mathcal{B}^d(f))^i_ju_{2r}+\text{other terms}\in H^*(C_f)\otimes\Q$$
which is a generalization of the Adams $e$-invariant that we are going to use to detect the triviality of $f$. We observe basic properties of our generalization of the Adams $e$-invariant. Note that $X/X^{(2d-2)}\simeq\bigvee_{n_d}S^{2d}$ such that $j^\text{th}$ sphere $S^{2d}$ corresponds to the cohomology class $x_j^d$. Let $\pi_j$ be the composite $X\xrightarrow{\rm proj}X/X^{(2d-2)}\simeq\bigvee_{n_d}S^{2d}\to S^{2d}$, where the last arrow is the pinch map onto the $j^\text{th}$ sphere. By definition, we immediately have the following.

\begin{lemma}
\label{e-top}
For $r>d$, $e(\mathcal{B}^d(f))_j^d\equiv e(\pi_j\circ f)\mod 1$.
\end{lemma}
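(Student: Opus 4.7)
The plan is to exploit the naturality of the mapping cone construction under the pinch map $\pi_j \colon X \to S^{2d}$. Since $\pi_j \circ f$ is the attaching map for the top cell of $C_{\pi_j\circ f}$, the map $\pi_j$ extends by the identity on the $2r$-cell to a map $\bar{\pi}_j \colon C_f \to C_{\pi_j \circ f}$ of mapping cones, inducing $\bar{\pi}_j^* u_{2d} = x_j^d$ and $\bar{\pi}_j^* u_{2r} = u_{2r}$ in integral cohomology. Choose a rational lift $a \in \Q$ of $e(\pi_j \circ f)$ and a generator $\xi \in \widetilde{K}(C_{\pi_j\circ f})$ with $\ch(\xi) = u_{2d} + au_{2r}$; naturality of the Chern character then gives $\ch(\bar{\pi}_j^*\xi) = x_j^d + au_{2r}$.

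On the other hand, since $\widehat{\xi}_j^d$ restricts to $\xi_j^d$ on $X$ with $\ch(\xi_j^d) = x_j^d$, the splitting $H^*(C_f)\otimes\Q \cong (H^*(X)\otimes\Q) \oplus \Q u_{2r}$ valid for $r > d$, combined with the definition of $e(\mathcal{B}^d(f))^d_j$, yields $\ch(\widehat{\xi}_j^d) = x_j^d + e(\mathcal{B}^d(f))^d_j\, u_{2r}$. Consequently $\widehat{\xi}_j^d - \bar{\pi}_j^*\xi \in \widetilde{K}(C_f)$ has Chern character $(e(\mathcal{B}^d(f))^d_j - a)\,u_{2r}$, and it remains to show that this difference is an integer multiple of $\eta$.

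The key input is that $X$, being evenly generated, has torsion-free $K$-theory with $\widetilde{K}^{-1}(X) = 0$: the Atiyah--Hirzebruch spectral sequence collapses because all its potential differentials land in odd-degree cohomology, which vanishes. The Chern character is therefore injective on $\widetilde{K}(X)$, so the class $\xi_j^d - \pi_j^*(\xi|_{S^{2d}}) \in \widetilde{K}(X)$, whose Chern character is $x_j^d - x_j^d = 0$, must itself vanish. Thus $\widehat{\xi}_j^d - \bar{\pi}_j^*\xi$ restricts to $0$ in $\widetilde{K}(X)$, and the short exact sequence $0 \to \widetilde{K}(S^{2r}) \to \widetilde{K}(C_f) \to \widetilde{K}(X) \to 0$ places this difference inside $\Z\eta$. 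Reading off the $u_{2r}$-coefficient yields $e(\mathcal{B}^d(f))^d_j - a \in \Z$, which is the desired congruence.

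The main obstacle is the possibility that $\xi_j^d$ and $\pi_j^*(\xi|_{S^{2d}})$ differ by a torsion element of $\widetilde{K}(X)$; without ruling this out, the difference $\widehat{\xi}_j^d - \bar{\pi}_j^*\xi$ need not be an integral multiple of $\eta$ and the congruence could fail. The evenly generated hypothesis (which also forces $C_f$ to be evenly generated thanks to $r > d$) is precisely what eliminates this torsion and makes the argument succeed.
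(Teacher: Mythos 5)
Your proof is correct. The paper gives no argument at all for this lemma (it simply says "by definition, we immediately have the following"), so the comparison is with what the authors presumably had in mind rather than with a written proof. Your route is the natural unpacking: construct the induced map $\bar\pi_j\colon C_f\to C_{\pi_j\circ f}$ from the homotopy-commutative square over $f$ and $\pi_j\circ f$, compare $\ch(\widehat\xi_j^d)$ with $\ch(\bar\pi_j^*\xi)$, and show the difference lies in $\Z\eta$ by restricting to $X$. The one point where the lemma could conceivably fail — a torsion discrepancy between $\xi_j^d$ and $\pi_j^*(\xi|_{S^{2d}})$ — is exactly the thing you identify and correctly rule out using the fact that $X$ evenly generated forces $\widetilde K(X)$ to be torsion-free and $\widetilde K^{-1}(X)=0$, so that $\ch$ is injective and the $K$-theory cofibre sequence for $(C_f,X)$ is short exact. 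This is the content the paper's "by definition" silently relies on, and making it explicit is worthwhile.
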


When $f$ deforms into the $2k$-skeleton $X^{(2k)}$, we can construct both $e(\mathcal{B}^k(f))_j^i$ and $e(\mathcal{B}^d(f))_j^i$ for $i\le k$ by regarding $f$ as a map into $X^{(2k)}$ and $X$, respectively. By construction, we have the following.

\begin{lemma}
\label{e-induction}
If $f$ deforms into $X^{(2k)}$, then $e(\mathcal{B}^k(f))_j^i=e(\mathcal{B}^d(f))_j^i$ for $i\le k$.
\end{lemma}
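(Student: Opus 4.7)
The plan is to realize the deformation hypothesis geometrically as a map of mapping cones and then pull everything back along it. Since $f\colon S^{2r-1}\to X$ deforms into $X^{(2k)}$, I may replace $f$ up to homotopy by $\iota\circ f'$, where $f'\colon S^{2r-1}\to X^{(2k)}$ and $\iota\colon X^{(2k)}\hookrightarrow X$ is the skeletal inclusion. The map $\iota$ extends to a map of mapping cones $\bar\iota\colon C_{f'}\to C_f$ which agrees with $\iota$ on the base and is the identity on the attached $2r$-cell. In particular $\bar\iota^*(u_{2r})=u_{2r}$ in cohomology, and for appropriate orientation choices the distinguished lifts $\eta\in\widetilde{K}(C_f)$ and $\eta'\in\widetilde{K}(C_{f'})$ with $\ch(\eta)=\ch(\eta')=u_{2r}$ satisfy $\bar\iota^*(\eta)=\eta'$.

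Next I would arrange compatible choices of bases. Because the admissible basis $\mathcal{B}^d$ of $\widetilde{K}(X)$ restricts by construction to $\mathcal{B}^k$ on $X^{(2k)}$, for every $i\le k$ the pullback $\bar\iota^*(\xi_j^i)$ of an element $\xi_j^i\in\widehat{\mathcal{B}}^d\subset\widetilde{K}(C_f)$ restricts on $X^{(2k)}\subset C_{f'}$ to $\xi_j^i\in\mathcal{B}^k$. Therefore $\bar\iota^*(\xi_j^i)$ is a legitimate choice for the corresponding element of $\widehat{\mathcal{B}}^k\subset\widetilde{K}(C_{f'})$ in the definition of $\mathcal{B}^k(f)$, and I would fix this choice throughout.

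With these compatible choices in place, naturality of the Chern character yields $\bar\iota^*\bigl(\ch(\xi_j^i\in\widehat{\mathcal{B}}^d)\bigr)=\ch(\xi_j^i\in\widehat{\mathcal{B}}^k)$. By the definition of the generalized $e$-invariant, the right-hand side equals
$$e(\mathcal{B}^k(f))_j^i\,u_{2r}+(\text{terms coming from }H^*(X^{(2k)})\otimes\Q),$$
while the left-hand side equals
$$e(\mathcal{B}^d(f))_j^i\,\bar\iota^*(u_{2r})+\bar\iota^*(\text{terms coming from }H^*(X)\otimes\Q)=e(\mathcal{B}^d(f))_j^i\,u_{2r}+(\text{restricted terms}).$$
Comparing the coefficients of $u_{2r}$ yields the desired equality. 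The only point that requires genuine care is the initial compatibility of basis choices; once that is arranged, the rest is pure Chern-character naturality, so I do not anticipate a serious obstacle.
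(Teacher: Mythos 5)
Your proof is correct and fills in exactly what the paper dismisses as ``by construction'': the skeletal inclusion $X^{(2k)}\hookrightarrow X$ extends to a map of mapping cones $\bar\iota\colon C_{f'}\to C_f$ sending the top-cell generator to itself, and pulling back the degree-$\le k$ part of $\widehat{\mathcal{B}}^d$ along $\bar\iota$ gives a legitimate choice for $\widehat{\mathcal{B}}^k$, after which the equality of $e$-invariants is just naturality of the Chern character. This is the intended argument; the only minor imprecision is the phrase ``$\mathcal{B}^d$ restricts to $\mathcal{B}^k$,'' which holds only for the degree-$\le k$ elements $\xi_j^i$ (the rest restrict to $0$ in $\widetilde K(X^{(2k)})$), but that is exactly the range $i\le k$ you are using.
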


\begin{proposition}
\label{e-invariant-trivial}
If $e(\mathcal{B}^d(f))_j^i$ is an integer for all $i,j$ and $d\le p^2-2$, then the $p$-localization of $f$ is stably null homotopic.
\end{proposition}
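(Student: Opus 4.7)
The plan is to deform $f$ downward through the even skeleta of $X$ one step at a time, treating the integrality of the coefficients $e(\mathcal{B}^d(f))_j^i$ as the successive vanishing obstructions and invoking Theorem~\ref{e-invariant} at each stage to convert these into $p$-local stable triviality. Formally, I would prove by descending induction on $k$, from $k=d$ down to $k=0$, that the $p$-localization of $f$ deforms stably into $X^{(2k)}$. The base case $k=d$ is tautological since $X=X^{(2d)}$.

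For the inductive step, suppose we have a $p$-local stable factorization $f_k\colon S^{2r-1}\to X^{(2k)}$ for some $1\le k\le d$. The cofibration
\[
X^{(2k-2)}\hookrightarrow X^{(2k)}\xrightarrow{q_k} X^{(2k)}/X^{(2k-2)}\simeq\bigvee_{j=1}^{n_k}S^{2k}
\]
reduces a further deformation of $f_k$ into $X^{(2k-2)}$ to showing that $q_k\circ f_k$ is null. Stably, a map into a wedge of spheres is determined by its pinch projections onto each factor, so it suffices to show that every $\pi_j\circ f_k\colon S^{2r-1}\to S^{2k}$ is $p$-locally stably null. Regarding $f_k$ as a map into the evenly generated $2k$-dimensional complex $X^{(2k)}$, Lemma~\ref{e-top} yields $e(\pi_j\circ f_k)\equiv e(\mathcal{B}^k(f_k))_j^k\pmod 1$, and Lemma~\ref{e-induction} identifies this with $e(\mathcal{B}^d(f))_j^k\pmod 1$, which is zero by the hypothesis. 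The stable class of $\pi_j\circ f_k$ lies in $\pi^S_{2(r-k)-1}$, and the bound $d\le p^2-2$ keeps $r-k$ inside the injectivity window of the Adams $e$-invariant, so Theorem~\ref{e-invariant} delivers $p$-local stable triviality of $\pi_j\circ f_k$.

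Iterating the inductive step brings $f$ down to $X^{(0)}$, a point, so $f$ is $p$-locally stably null homotopic. The main obstacle I anticipate is not the overall induction but the skeletal bookkeeping: one has to verify that the restriction of an admissible basis $\mathcal{B}^d$ of $\widetilde{K}(X)$ to $\widetilde{K}(X^{(2k)})$ is admissible in its own right (so that $e(\mathcal{B}^k(f_k))_j^k$ makes sense), and that the $p$-local stable deformations produced at earlier inductive stages interact cleanly with these bases, so that the equalities of Lemmas~\ref{e-top} and~\ref{e-induction} really transport the integrality hypothesis into the vanishing of an honest Adams $e$-invariant at each level.
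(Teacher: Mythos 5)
Your proof follows the same approach as the paper: a descending induction on skeleta, pinching onto wedges of spheres, using Lemmas~\ref{e-top} and~\ref{e-induction} to convert the integrality hypothesis into the vanishing of a classical Adams $e$-invariant, and then invoking Theorem~\ref{e-invariant} together with the cofibre exact sequence of stable homotopy groups to push $f$ down one more skeleton. The paper opens by applying cellular approximation to put $f$ into $X^{(2r-2)}$ rather than starting at $X^{(2d)}$, but this only removes some trivial steps; the core argument and the bookkeeping with admissible bases via Lemma~\ref{e-induction} are the same.
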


\begin{proof}
Localize everything at the prime $p$, so we abbreviate the notation $-_{(p)}$ for the $p$-localization. By the cellular approximation theorem, $f$ deforms into $X^{(2r-2)}$, so we consider a map $f\colon S^{2r-1}\to X^{(2r-2)}$ for which we can assume the same condition on the generalized Adams $e$-invariant by Lemma \ref{e-induction}. Consider the composite
$$\bar{f}\colon S^{2r-1}\xrightarrow{f}X^{(2r-2)}\xrightarrow{\rm proj}X^{(2r-2)}/X^{(2r-4)}\simeq\bigvee_{n_{r-1}}S^{2r-2}$$
where the $j^\text{th}$ sphere in the last space corresponds to $x_j^{r-1}$. Let $\pi_j\colon\bigvee_{n_{r-1}}S^{2r-2}\to S^{2r-2}$ be the pinch map onto the $j^\text{th}$ sphere. Then by Lemma \ref{e-top} and the assumption, we have $e(\pi_j\circ\bar{f})\equiv 0\mod 1$, implying $\pi_j\circ\bar{f}$ is stably null homotopic by Theorem \ref{e-invariant}. Thus we obtain that $\bar{f}$ itself is stably null homotopic. Consider the exact sequence of the stable homotopy groups
$$\pi_{2r-1}^S(X^{(2r-4)})\to\pi_{2r-1}^S(X^{(2r-2)})\to\pi_{2r-1}^S(X^{(2r-2)}/X^{(2r-4)}).$$
Then $f$ belongs to the middle group and is mapped to $\bar{f}$ by the last arrow, so it deforms into $X^{(2r-4)}$ stably. Hence, to continue the induction, it suffices to consider a map $f\colon S^{2r-1}\to X^{(2r-4)}$ for which we can assume the same condition on the generalized Adams $e$-invariant by Lemma \ref{e-induction} as well. Thus by iterating this procedure, we obtain that $f$ deforms stably into $X^{(2k)}$ for any $k$, implying $f$ is stably null homotopic. Therefore the proof is completed.
\end{proof}

\section{Realization of cohomology isomorphisms and $K$-theory}

This section studies the $p$-local stable realizability of cohomology isomorphisms between evenly generated CW-complexes by using $K$-theory. Throughout this section, let $X_1,X_2$ be connected evenly generated finite CW-complexes. We say that $\theta\colon K(X_1)\to K(X_2)$ is a lift of $\bar{\theta}\colon H^*(X_1)\to H^*(X_2)$ if the equality
$$\ch\circ\theta=(\bar{\theta}\otimes\Q)\circ\ch$$
holds. For the rest of this section, we assume that there are isomorphisms 
$$\theta\colon K(X_1)\xrightarrow{\cong}K(X_2)\quad\text{and}\quad\bar{\theta}\colon H^*(X_1)\xrightarrow{\cong}H^*(X_2)$$
which are compatible with the Chern character. We consider the $p$-local realizability of the isomorphism $\theta$ by a stable homotopy equivalence between $X_1$ and $X_2$. We first observe induced maps of $\theta,\bar{\theta}$ on subcomplexes and their quotinets. 

\begin{proposition}
\label{theta-restriction}
Let $Y_i$ be a subcomplex of $X_i$ for $i=1,2$ such that $\bar{\theta}$ restricts to an isomorphism $\hat{\theta}\vert_{Y_1}\colon H^*(Y_1)\xrightarrow{\cong}H^*(Y_2)$. Then $\theta,\bar{\theta}$ induce
\begin{enumerate}
\item an isomorphism $\theta\vert_{Y_1}\colon K(Y_1)\xrightarrow{\cong}K(Y_2)$ which is a lift of $\bar{\theta}\vert_{Y_1}$, and
\item isomorphisms $\Theta\colon K(X_1/Y_1)\xrightarrow{\cong}K(X_2/Y_2)$ and $\overline{\Theta}\colon H^*(X_1/Y_1)\xrightarrow{\cong}H^*(X_2/Y_2)$ such that $\Theta$ is a lift of $\overline{\Theta}$.
\end{enumerate}
\end{proposition}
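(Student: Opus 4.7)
The plan is to exploit the fact that on a connected evenly generated finite CW-complex $X$, the Chern character $\ch \colon K(X)\to H^\ast(X)\otimes\Q$ is injective (the Atiyah--Hirzebruch spectral sequence collapses, so $K(X)$ is torsion-free and $\tilde K^{-1}(X)=0$), and so the cofibration $Y_i\hookrightarrow X_i\to X_i/Y_i$ gives short exact sequences
$$0\to \tilde K(X_i/Y_i)\to \tilde K(X_i)\to \tilde K(Y_i)\to 0$$
and the analogous one in reduced cohomology, compatible with $\ch$. In particular note that $X_i/Y_i$ is again evenly generated, so the same facts apply to it.

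For part (1), the plan is to show that $\theta$ sends $\ker(K(X_1)\to K(Y_1))$ into $\ker(K(X_2)\to K(Y_2))$; by the exact sequences above it then descends to a map $\theta|_{Y_1}\colon K(Y_1)\to K(Y_2)$. For $x\in\ker(K(X_1)\to K(Y_1))$, apply the injective map $\ch$ to the image in $K(Y_2)$: using the hypothesis $\ch\circ\theta=(\bar\theta\otimes\Q)\circ\ch$ and the fact that $\bar\theta$ restricts, one computes $\ch$ of the restriction equals $(\bar\theta|_{Y_1}\otimes\Q)\circ\ch(\text{res}\,x)=0$, hence the restriction itself is zero. Applying the same argument to $\theta^{-1}$ (which is an isomorphism lifting $\bar\theta^{-1}$ and has $\bar\theta^{-1}$ restricting to $Y_2\to Y_1$) shows the induced map is an isomorphism. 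Finally, that $\theta|_{Y_1}$ lifts $\bar\theta|_{Y_1}$ follows from surjectivity of $\text{res}\colon K(X_1)\to K(Y_1)$ together with the defining square and the lift property of $\theta$.

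For part (2), the same short exact sequences identify $\tilde K(X_i/Y_i)$ with $\ker(\tilde K(X_i)\to \tilde K(Y_i))$, and analogously in integral cohomology. Hence $\theta$ and $\bar\theta$ restrict to isomorphisms $\Theta$ and $\overline\Theta$ on these kernels, providing the desired induced maps (the cohomology version is even simpler since no Chern character is involved). The compatibility $\ch\circ\Theta=(\overline\Theta\otimes\Q)\circ\ch$ follows by naturality of $\ch$ and injectivity of $\ch$ on the quotient, using that $\Theta$ and $\overline\Theta$ are characterized by the respective commutative squares with the inclusions.

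I do not expect a real obstacle here: the whole argument is a diagram chase once one has recorded (i) the collapse of the Atiyah--Hirzebruch spectral sequence for evenly generated complexes, (ii) the resulting injectivity of $\ch$, and (iii) the natural short exact sequences in $K$-theory and cohomology for a pair of evenly generated complexes. The only mildly delicate point is the need to invoke injectivity of $\ch$ on $K(Y_2)$ (and on $K(X_2/Y_2)$) to deduce equalities of classes from equalities of their Chern characters; this is where the evenly generated hypothesis is essential and cannot be dropped.
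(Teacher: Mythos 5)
Your argument is correct and uses the same essential ingredients as the paper's proof: collapse of the Atiyah--Hirzebruch spectral sequence for evenly generated complexes, the resulting short exact sequences in $K$-theory and cohomology for the pair $(X_i,Y_i)$, injectivity of the Chern character, and a diagram chase to transport $\theta$, $\bar\theta$ to the subcomplex and the quotient.

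The only noteworthy difference is organizational. The paper proves (2) first: it identifies $K(X_i/Y_i)$ with the kernel of $K(X_i)\xrightarrow{\ch}H^*(X_i)\otimes\Q\to H^*(Y_i)\otimes\Q$, produces an injection $\Theta$ that is a rational isomorphism, and then upgrades it to an integral isomorphism by observing that $K(X_i/Y_i)$ is a direct summand of the free abelian group $K(X_i)$ (a pure subgroup of full rank inside a direct summand must be the whole summand). With $\Theta$ in hand, (1) follows by comparing the two short exact sequences. You go the other way around: you first show directly that $\theta$ carries $\ker(K(X_1)\to K(Y_1))$ into $\ker(K(X_2)\to K(Y_2))$ via the Chern-character computation, descend to $\theta|_{Y_1}$ using surjectivity of $K(X_1)\to K(Y_1)$, and get the inverse by applying the same argument to $\theta^{-1}$ (which indeed lifts $\bar\theta^{-1}$, and $\bar\theta^{-1}$ restricts to $(\bar\theta|_{Y_1})^{-1}$, as one sees by inverting the restriction square). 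Part (2) is then a repetition of the same reasoning on the kernels themselves. Your route avoids the small rank/purity argument of the paper in favor of an explicit inverse; both are fine, and the underlying mechanism --- injectivity of $\ch$ on all spaces involved, which is exactly where the evenly generated hypothesis enters --- is identical.
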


\begin{proof}
We first show (2). Note that $X_i/Y_i$ is an evenly generated CW-complex since so are $X_i,Y_i$ and $Y_i$ is a subcomplex of $X_i$. Then there is a commutative diagram of solid arrows
\begin{equation}
\label{exact-seq}
\xymatrix{0\ar[r]&H^*(X_1/Y_1)\ar@{-->}[d]^{\bar{\Theta}}_\cong\ar[r]&H^*(X_1)\ar[d]^{\bar{\theta}}_\cong\ar[r]&H^*(Y_1)\ar[d]^{\bar{\theta}\vert_{Y_1}}_\cong\ar[r]&0\\
0\ar[r]&H^*(X_2/Y_2)\ar[r]&H^*(X_2)\ar[r]&H^*(Y_2)\ar[r]&0}
\end{equation}
with exact rows, so we get a dotted isomorphism $\overline{\Theta}$. As well as \eqref{exact-seq}, there is a commutative diagram
$$\xymatrix{0\ar[r]&K(X_i/Y_i)\ar[r]\ar[d]^\ch&K(X_i)\ar[d]^\ch\ar[r]&K(Y_i)\ar[r]\ar[d]^\ch&0\\
0\ar[r]&H^*(X_i/Y_i)\otimes\Q\ar[r]&H^*(X_i)\otimes\Q\ar[r]&H^*(Y_i)\otimes\Q\ar[r]&0}$$
with exact rows. Notice that the Chern character $\ch\colon K(X_i)\to H^*(X_i)\otimes\Q$ is injective since $H^*(X_i)$ is a free abelian group. Then it follows that $K(X_i/Y_i)$ is the kernel of the composite $f_i\colon K(X_i)\xrightarrow{\ch}H^*(X_i)\otimes\Q\to H^*(Y_i)\otimes\Q$. So since there is a commutative diagram
$$\xymatrix{K(X_1)\ar[r]^{f_1}\ar[d]^{\theta}_\cong&H^*(Y_1)\otimes\Q\ar[d]^{\bar{\theta}\vert_{Y_1}\otimes\Q}_\cong\\
K(X_2)\ar[r]^{f_2}&H^*(Y_2)\otimes\Q,}$$
we get an injection $\Theta\colon K(X_1/Y_1)\rightarrow K(X_2/Y_2)$ which becomes an isomorphism after tensoring $\Q$. Since $K(X_i/Y_i)$ is a direct summand of the free abelian group $K(X_i)$, we conclude that $\Theta$ is an isomorphism. Moreover, by a straightforward diagram chasing, we see that $\Theta$ is a lift of $\overline{\Theta}$. Therefore the proof of (2) is done. We finally prove (1). There is a commutative diagram of solid arrows
$$\xymatrix{0\ar[r]&K(X_1/Y_1)\ar[d]^{\Theta}_\cong\ar[r]&K(X_1)\ar[d]^\theta_\cong\ar[r]&K(Y_1)\ar@{-->}[d]\ar[r]&0\\
0\ar[r]&K(X_2/Y_2)\ar[r]&K(X_2)\ar[r]&K(Y_2)\ar[r]&0}$$
with exact rows. Then there is a dotted arrow which makes the diagram commute and is an isomorphism. Therefore (1) is proved.
\end{proof}

The cases to which we apply Proposition \ref{theta-restriction} are:
\begin{enumerate}
\item $Y_i=X_i^{(2k)}$ for $i=1,2$, and
\item $Y_i$ is a subcomplex $X_i^{(2k)}\cup e_i$ for $i=1,2$ such that $\bar{\theta}$ sends the cohomology class of $e_1$ to that of $e_2$.
\end{enumerate}

We now prove the $p$-local realizability of $\bar{\theta}$ by a stable map.

\begin{theorem}
\label{realizability}
For $\dim X_1=\dim X_2\le 2p^2-4$, $\bar{\theta}$ is $p$-locally realized by a stable map.
\end{theorem}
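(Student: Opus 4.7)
The plan is to build, by induction on $k$, a stable $p$-local map $h_k\colon X_2^{(2k)}\to X_1^{(2k)}$ realising $\bar{\theta}|_{X^{(2k)}}$ on cohomology, and then to take $k=p^2-2$ (so that $X_i^{(2k)}=X_i$). Because the Chern character is injective on $\widetilde{K}$ of evenly generated complexes (as used in the proof of Proposition \ref{theta-restriction}), any stable map realising $\bar{\theta}|_{X^{(2k)}}$ on cohomology automatically realises the unique K-theory lift $\theta|_{X^{(2k)}}$. The base case $k=1$ is immediate because each $X_i^{(2)}$ is a wedge of $2$-spheres, on which any integer matrix representing $\bar{\theta}$ on $H^2$ is realised by a map of wedges.

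For the inductive step, I would write down the ``candidate'' extension $h_{k+1}$ cell by cell: on cellular chains in degree $2k+2$, send the $l$-th cell of $X_2$ to $\sum_p c_{pl}$ times the $p$-th cell of $X_1$, where $(c_{pl})$ is the matrix of $\bar{\theta}$ on $H^{2k+2}$ in the cell-dual bases. A cofibre-sequence argument shows that this candidate lifts to a genuine stable map extending $h_k$ provided each $p$-local obstruction $\Psi_l := h_k\circ\alpha^2_l - \sum_p c_{pl}\,\alpha^1_p \in \pi_{2k+1}^S(X_1^{(2k)})_{(p)}$ vanishes, where $\alpha^i_l\colon S^{2k+1}\to X_i^{(2k)}$ is the attaching map of the $l$-th $(2k+2)$-cell of $X_i$.

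To show each $\Psi_l$ is null, I would apply Proposition \ref{e-invariant-trivial} with $X=X_1^{(2k)}$, so $d=k\le p^2-3$ and $r=k+1>d$. By additivity of the generalised Adams $e$-invariant on stable homotopy groups, $e(\mathcal{B}^k(\Psi_l))^i_m$ reduces mod $1$ to $e(\mathcal{B}^k(h_k\alpha^2_l))^i_m - \sum_p c_{pl}\,e(\mathcal{B}^k(\alpha^1_p))^i_m$. Each summand has an explicit K-theoretic meaning: $e(\mathcal{B}^k(\alpha^1_p))^i_m\equiv\varepsilon^i_{mp}\pmod 1$ is the Chern character coefficient that appears when the admissible basis of $\widetilde{K}(X_1^{(2k)})$ is extended to $\widetilde{K}(X_1^{(2k+2)})$ and restricted to the subcomplex $X_1^{(2k)}\cup e_1^p$; and, using the induced map of cofibres together with the inductive identity $h_k^*=\theta|$, $e(\mathcal{B}^k(h_k\alpha^2_l))^i_m\equiv\sum_n a^i_{mn}\,\delta^i_{nl}\pmod 1$, where $(a^i_{mn})$ is the matrix of $\bar{\theta}$ on $H^{2i}$ and $\delta^i_{nl}$ is the analogous coefficient in $X_2$.

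The crux is the following K-theoretic identity: writing $\theta(\xi^i_m)\in\widetilde{K}(X_2^{(2k+2)})$ as an integer combination of the admissible basis of $\widetilde{K}(X_2^{(2k+2)})$ and comparing the coefficient of $x'^{k+1}_l$ in two computations of $\ch(\theta(\xi^i_m))$ forces $\sum_p \varepsilon^i_{mp}c_{pl}\equiv\sum_n a^i_{mn}\,\delta^i_{nl}\pmod 1$. Substituting this back yields $e(\mathcal{B}^k(\Psi_l))^i_m\equiv 0\pmod 1$ for all $i,m$, and Proposition \ref{e-invariant-trivial} then gives $\Psi_l\simeq 0$ stably $p$-locally, completing the induction. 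The main obstacle I anticipate is organising the K-theory bookkeeping, for admissible bases, their extensions to higher skeleta, and induced maps on cofibres of $h_k\alpha^2_l$ versus $\alpha^2_l$, cleanly enough that this integrality identity emerges transparently from the hypothesis that $\theta$ is a $\Z$-isomorphism, rather than being buried in index gymnastics.
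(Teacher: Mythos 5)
Your proposal is correct and follows essentially the same route as the paper: induction over skeleta, with the obstruction $\varphi_1-h\circ\varphi_2$ killed by showing its generalized Adams $e$-invariants are integral (the paper shows they vanish exactly) and invoking Proposition \ref{e-invariant-trivial}. The ``K-theory bookkeeping'' you worry about is exactly what the paper's Proposition \ref{theta-restriction}, applied to the subcomplexes $X_i^{(2k)}\cup e_i$, packages cleanly --- it yields $h^*=\theta$ on the cofibres of the individual attaching maps, which gives your crux congruence (in fact an equality) without tracking the higher-filtration coefficients explicitly.
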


\begin{proof}
We put $\dim X_1=X_2=2d$, and denote the induced maps in Proposition \ref{theta-restriction} by the same symbols $\theta,\bar{\theta}$. We prove the $p$-local realizability of $\bar{\theta}$ by a stable map inductively on skeleta. We assume all spaces and maps are stabilized and $p$-localized, so we omit the stabilization functor $\Sigma^\infty$ and the $p$-localization $-_{(p)}$. 

The case $k=1$ is trivial since the spaces are wedges of $S^2$ for which any self-maps in homology is realizable. We now assume $k>1$ and there is a stable map $h\colon X_2^{(2k-2)}\to X_1^{(2k-2)}$ such that $h^*=\bar{\theta}\otimes\Z_{(p)}$. By arranging $2k$-cells of $X_2$, we may assume that $\theta,\bar{\theta}$ induce the identity map on $X_i^{(2k)}/X_i^{(2k-2)}:=\bigvee_aS^{2k}$. Let $\varphi_i\colon\bigvee_aS^{2k-1}\to X_i^{(2k-2)}$ be the attaching map of the $2k$-dimensional cells of $X_i$, and let $\iota_\ell\colon S^{2k-1}\to\bigvee_aS^{2k-1}$ denote the inclusion of the $\ell^\text{th}$ sphere. Then by Proposition \ref{theta-restriction} there are commutative diagrams
$$\xymatrix{0\ar[r]&K(S^{2k})\ar@{=}[d]\ar[r]&K(C_{\varphi_1\circ\iota_\ell})\ar[d]^\theta\ar[r]&K(X_1^{(2k-2)})\ar[d]^\theta\ar[r]&0\\
0\ar[r]&K(S^{2k})\ar[r]&K(C_{\varphi_2\circ \iota_\ell})\ar[r]&K(X^{(2k-2)}_2)\ar[r]&0}$$
and
$$\xymatrix{0\ar[r]&H^*(S^{2k})\ar@{=}[d]\ar[r]&H^*(C_{\varphi_1\circ\iota_\ell})\ar[d]^{\bar\theta}\ar[r]&H^*(X_1^{(2k-2)})\ar[d]^{h^*=\bar{\theta}}\ar[r]&0\\
0\ar[r]&H^*(S^{2k})\ar[r]&H^*(C_{\varphi_2\circ\iota_\ell})\ar[r]&H^*(X_2^{(2k-2)})\ar[r]&0}$$
with exact rows which are compatible by the Chern character. Since the Chern characters on these two diagrams are injective, we see that $h^*=\theta$ in the first diagram. Then we obtain
\begin{equation}
\label{e-1}
e(\mathcal{B}^{k-1}(\varphi_1\circ\iota_\ell))^i_j=e(h^*(\mathcal{B}^{k-1})(\varphi_2\circ\iota_\ell))^i_j
\end{equation}
for any $i,j$, where $h^*(\mathcal{B}^{k-1})$ is the admissible basis of $\widetilde{K}(X_2^{(2k-2)})$ as in Proposition \ref{equiv-admissible}. On the other hand, it immediately follows from the definition of the generalized Adams $e$-invariant that
\begin{equation}
\label{e-2}
e(h^*(\mathcal{B}^{k-1})(\varphi_2\circ\iota_\ell))^i_j=e(\mathcal{B}^{k-1}(h\circ\varphi_2\circ\iota_\ell))^i_j
\end{equation}
for any $i,j$. We now consider a map
$$f:=\varphi_1-h\circ\varphi_2\colon\bigvee_aS^{2k-1}\to X_1^{(2k-2)}.$$ 
By definition of the generalized Adams $e$-invariant, we have
$$e(\mathcal{B}^{k-1}(f\circ\iota_\ell))^i_j=e(\mathcal{B}^{k-1}(\varphi_1\circ\iota_\ell))^i_j-e(\mathcal{B}^{k-1}(h\circ\varphi_2\circ\iota_\ell))^i_j$$
for any $i,j$, so by \eqref{e-1} and $\eqref{e-2}$ we obtain $e(\mathcal{B}^{k-1}(f\circ\iota_\ell))^i_j=0$ for any $i,j$. Then by Proposition \ref{e-invariant-trivial}, $\varphi_1$ and $h\circ\varphi_2$ are stably homotopic, implying that there is a stable map $\tilde{h}\colon X_2^{(2k)}\to X_1^{(2k)}$ satisfying a homotopy commutative diagram
$$\xymatrix{\bigvee_aS^{2k-1}\ar@{=}[d]\ar[r]^{\varphi_1}&X_1^{(2k-2)}\ar[r]&X_1^{(2k)}\\
\bigvee_aS^{2k-1}\ar[r]^{\varphi_1}&X_2^{(2k-2)}\ar[u]_h\ar[r]&X_2^{(2k)}\ar[u]_{\tilde{h}}.}$$
Therefore by the Puppe exact sequence, we see that $\tilde{h}$ realizes $\bar{\theta}$, completing the proof.
\end{proof}

\section{Proof of Theorem \ref{main}}

This section applies Theorem \ref{realizability} to quasitoric manifolds, and then proves Theorem \ref{main}. We recall from \cite{DJ} properties of quasitoric manifolds that we are going to use.

\begin{proposition}
[Davis and Januszkiewicz \cite{DJ}]
\label{qt}
For a quasitoric manifold $M$, the following hold:
\begin{enumerate}
\item $M$ is a connected evenly generated finite CW-complex;
\item $H^*(M)$ is generated by 2-dimensional elements.
\end{enumerate}
\end{proposition}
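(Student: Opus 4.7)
The plan is to derive both statements directly from the explicit quotient-construction of Davis and Januszkiewicz, in which $M$ is presented as $(T^n \times P)/{\sim}$ with $P$ the simple polytope $M/T^n$ and $\sim$ the equivalence relation determined by the characteristic function $\lambda$ on the facets of $P$. I would treat (1) and (2) separately, each by unwinding the constructions in \cite{DJ}.

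For (1), connectedness of $M$ is immediate because $P$ is connected (in fact contractible). For the cell structure, I would follow the Morse-theoretic argument of Davis--Januszkiewicz: choose a generic linear functional on the ambient $\R^n$ whose values at the vertices of $P$ are all distinct, and orient each edge of $P$ from the lower to the higher value. For each vertex $v$, let $i(v)$ be the number of edges at $v$ on which the functional decreases. Each vertex then contributes an open cell $e_v \subset M$ of real dimension $2i(v)$, obtained as the preimage under the orbit projection of a sufficiently small neighborhood of $v$ in $P$, and $M$ decomposes as $\bigsqcup_v e_v$. Since every cell has even real dimension, $M$ is an evenly generated finite CW-complex.

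For (2), I would exhibit explicit degree-2 generators. Each facet $F_i$ of $P$ has a preimage $M_i := \pi^{-1}(F_i)$ that is a codimension-2 closed submanifold of $M$, and its Poincar\'e dual $v_i$ lies in $H^2(M)$. The Davis--Januszkiewicz theorem provides the presentation $H^*(M) \cong \Z[v_1, \ldots, v_m]/(I_{\mathrm{SR}} + J_\lambda)$, where $I_{\mathrm{SR}}$ is the Stanley--Reisner ideal of $\partial P$ and $J_\lambda$ is the ideal of linear relations coming from $\lambda$. Since every polynomial generator $v_i$ has degree $2$, so does every ring generator of $H^*(M)$.

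The essentially only obstacle here is the Davis--Januszkiewicz machinery itself; but since the statement is cited from \cite{DJ}, I would simply extract (1) from their perfect Morse-theoretic cell decomposition and (2) from their Stanley--Reisner-type presentation of $H^*(M)$, with no further argument needed.
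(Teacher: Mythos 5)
The paper offers no proof of this proposition; it simply cites Davis--Januszkiewicz, and your proposal correctly reconstructs exactly the arguments from \cite{DJ} that the citation invokes --- the perfect-Morse-function cell decomposition by vertices of $P$ with even-dimensional cells indexed by $i(v)$, and the Stanley--Reisner presentation with degree-$2$ generators $v_i$. So your proposal is correct and takes the same (and essentially the only) route the paper implicitly relies on.
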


\begin{theorem}
\label{K-lift}
For quasitoric manifolds $M_1,M_2$, any isomorphism $\bar{\theta}\colon H^*(M_1)\xrightarrow{\cong}H^*(M_2)$ lifts to an isomorphism $\theta\colon K(M_1)\xrightarrow{\cong}K(M_2)$.
\end{theorem}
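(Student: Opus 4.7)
The plan is to construct $\theta$ explicitly using complex line bundles, exploiting the fact from Proposition \ref{qt}(2) that $H^*(M_i)$ is generated as a graded ring by elements of degree $2$. Assuming $\bar\theta$ is a graded ring isomorphism (as the hypothesis ``cohomology isomorphism'' entails), it restricts to an isomorphism on $H^2$, and I will build $\theta$ by matching line bundles on $M_1$ and $M_2$ whose first Chern classes are identified under $\bar\theta$.

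The key preliminary is the claim that for any quasitoric manifold $M$, the group $K(M)$ is generated as a $\Z$-module by the monomials $\prod_j(L_j-1)^{\alpha_j}$, where $L_1,\dots,L_m$ are complex line bundles whose first Chern classes form a $\Z$-basis of $H^2(M)$. Since $M$ is evenly generated with free cohomology (Proposition \ref{qt}(1)), the Atiyah--Hirzebruch spectral sequence for $K^*(M)$ collapses by parity; the skeletal filtration $F^{2k}K(M):=\ker(K(M)\to K(M^{(2k-2)}))$ satisfies $F^{2k}/F^{2k+2}\cong H^{2k}(M)$ with the isomorphism given by the degree-$2k$ term of $\ch$, and $\ch\colon K(M)\hookrightarrow H^*(M;\Q)$ is injective. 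Setting $x_j=c_1(L_j)$, the element $L_j-1$ lies in $F^2$ with leading Chern character $x_j$, so $\prod_j(L_j-1)^{\alpha_j}\in F^{2|\alpha|}$ represents $\prod_j x_j^{\alpha_j}\in H^{2|\alpha|}(M)$ modulo $F^{2|\alpha|+2}$. Because $x_1,\dots,x_m$ generate $H^*(M)$ as a ring by Proposition \ref{qt}(2), the monomials $x^\alpha$ with $|\alpha|=k$ span $H^{2k}(M;\Z)$, so the line-bundle monomials with $|\alpha|=k$ surject onto $F^{2k}/F^{2k+2}$. A downward induction on filtration degree then shows that these monomials generate $K(M)$ additively.

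The lift $\theta$ is then constructed as follows. Fix a $\Z$-basis $x_1,\dots,x_m$ of $H^2(M_1)$ together with line bundles $L_j$ on $M_1$ with $c_1(L_j)=x_j$, and choose line bundles $L'_j$ on $M_2$ with $c_1(L'_j)=\bar\theta(x_j)$; such $L'_j$ exist because $H^2(-;\Z)\cong[-,\mathbb{CP}^\infty]$ classifies complex line bundles. Since $\bar\theta$ is a graded ring isomorphism, for every multi-index $\alpha$,
\[
(\bar\theta\otimes\Q)\!\left(\prod_j(e^{x_j}-1)^{\alpha_j}\right)=\prod_j(e^{\bar\theta(x_j)}-1)^{\alpha_j}=\ch\!\left(\prod_j(L'_j-1)^{\alpha_j}\right)\in\ch(K(M_2)).
\]
Combined with the preliminary step, $(\bar\theta\otimes\Q)\circ\ch$ carries $K(M_1)$ into the subgroup $\ch(K(M_2))$ of $H^*(M_2;\Q)$. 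Injectivity of $\ch$ on $K(M_2)$ then yields a unique additive map $\theta\colon K(M_1)\to K(M_2)$ satisfying $\ch\circ\theta=(\bar\theta\otimes\Q)\circ\ch$, which by definition is a lift of $\bar\theta$. Applying the same construction to $\bar\theta^{-1}$ yields a two-sided inverse, so $\theta$ is an isomorphism.

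The main obstacle is the preliminary claim that line-bundle monomials generate $K(M)$ additively; this is the nontrivial structural input and is where Proposition \ref{qt}(2) on degree-$2$ generation of $H^*$ is critical. After that, everything is a formal consequence of the identity $\bar\theta(e^x-1)=e^{\bar\theta(x)}-1$ for $x\in H^2(M_1)$, valid because $\bar\theta$ is a graded ring isomorphism.
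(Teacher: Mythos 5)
Your proof is correct and follows essentially the same route as the paper's: both exploit Proposition~\ref{qt}(2), the collapse of the Atiyah--Hirzebruch spectral sequence for an evenly generated complex with free cohomology, and the injectivity of the Chern character to transfer $\bar\theta$ from cohomology to $K$-theory. The paper merely packages your line-bundle monomials as the image of pullbacks $\rho_i^*\colon K((\C P^\infty)^\ell)\to K(M_i)$ along the classifying maps of the degree-$2$ generators and proves $\mathrm{Ker}\,\rho_1^*\subset\mathrm{Ker}\,\rho_2^*$, which is the same step as your inclusion $(\bar\theta\otimes\Q)\circ\ch(K(M_1))\subset\ch(K(M_2))$.
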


\begin{proof}
Let $x_1,\ldots,x_\ell$ be a basis of $H^2(M_1)$. Then $\bar{\theta}(x_1),\ldots,\bar{\theta}(x_\ell)$ is a basis of $H^2(M_2)$. Put $\rho_1:=x_1\times\cdots\times x_\ell\colon M_1\to(\C P^\infty)^\ell$ and $\rho_2:=\bar{\theta}(x_1)\times\cdots\times\bar{\theta}(x_\ell)\colon M_2\to(\C P^\infty)^\ell$. By definition, we have 
$$\rho_2^*=\bar{\theta}\circ\rho_1^*$$
in cohomology. By considering the induced map between the Atiyah-Hirzebruch spectral sequences, we see that $\rho_i^*\colon K((\C P^\ell))\to K(M_i)$ is surjective for $i=1,2$. Then in order to get a map $\theta\colon K(M_1)\to K(M_2)$, it is sufficient to show that $\mathrm{Ker}\,\rho_1^*\subset\mathrm{Ker}\,\rho_2^*$. For $x\in K((\C P^\infty)^\ell)$, we suppose $\rho_1^*(x)=0$. Then we have
$$0=(\theta\otimes\Q)\circ\ch(\rho_1^*(x))=(\theta\otimes\Q)\circ\rho_1^*(\ch(x))=\rho_2^*(\ch(x))=\ch(\rho_2^*(x)),$$
implying $\rho_2^*(x)=0$ since $\ch\colon K(M_2)\to H^*(M_2)\otimes\Q$ is injective by Proposition \ref{qt}. Then we get a map $\theta\colon K(M_1)\to K(M_2)$ such that $\theta(\rho_1^*(y))=\rho_2^*(y)$ for any $y\in K((\C P^\infty)^\ell)$. We have that $\theta$ is a lift of $\bar{\theta}$. Indeed, for any $y\in K((\C P^\infty)^\ell)$,
$$\ch(\theta(\rho_1^*(y)))=\ch(\rho_2^*(y))=\rho_2^*(\ch(y))=(\bar{\theta}\otimes\Q)\circ\rho_1^*(\ch(y))=(\bar{\theta}\otimes\Q)(\ch(\rho_1^*(y)))$$
where $\rho_1^*\colon K((\C P^\infty)^\ell)\to K(M_1)$ is surjective. It remains to show that $\theta$ is an isomorphism. Since $\rho_2^*\colon K((\C P^\infty)^\ell)\to K(M_2)$ is surjective, so is $\theta$. If $\theta(x)=0$ for $x\in K(M_1)$, we have
$$0=\ch(\theta(x))=(\bar{\theta}\otimes\Q)(\ch(x)),$$
implying $x=0$ since $\bar{\theta}\otimes\Q$ is an isomorphism and $\ch\colon K(M_1)\to H^*(M_1)\otimes\Q$ is injective. Thus $\theta$ is injective, completing the proof.
\end{proof}

\begin{proof}
[Proof of Theorem \ref{main}]
Combine Theorem \ref{realizability} and \ref{K-lift} when $p$ is odd. For $p=2$ we only need to consider the case $\dim M_1=\dim M_2=4$ since $\dim M_1=\dim M_2$ implies $M_1=M_2=S^2$. The case $\dim M_i=4$ is proved in \cite{DJ}. Here is an alternative proof: $M_i$ has the stable homotopy type of a wedge of $S^2$ and $S^4$ or $\C P^2$ which is distinguished by mod 2 cohomology together with the action of the Steenrod operation $\mathrm{Sq}^2$. By Proposition, \ref{qt}, $\bar{\theta}$ respects $\mathrm{Sq}^2$, and therefore the proof is completed.
\end{proof}

\end{document}